\numberwithin{equation}{section}
\title{Viscosity solution PDEs in hybrid games with
mechanical work payoff}
\author{Constantin Udri\c ste, Elena-Laura Otob\^{i}cu, Ionel \c Tevy}
\begin{document}
\date{}
\renewcommand{\abstractname}{}

\maketitle
\numberwithin{equation}{section}
\newtheorem{theorem}{Theorem}[section]
\newtheorem{lemma}[theorem]{Lemma}
\newtheorem{corollary}[theorem]{Corollary}
\newtheorem{definition}[theorem]{Definition}
\newtheorem{remark}[theorem]{Remark}

\begin{abstract}
In a multitime hybrid differential game with mechanical work payoff, 
the multitime upper value function and the multitime lower value function 
are viscosity solutions of original PDEs of type Hamilton-Jacobi-Isaacs.
\end{abstract}

\noindent {\bf MSC2010}: multitime hybrid differential games; multitime viscosity solution; multitime dynamic programming.

\noindent {\bf Keywords}: 49L20, 91A23, 49L25, 35F21.

\section{Multitime lower or upper value function}

All variables and functions must satisfy suitable conditions (for example, see \cite{[8]}).
We analyze a {\it multitime hybrid differential game, with two teams of players}, whose {\it Bolza payoff} is the sum between a path independent curvilinear integral (mechanical work) and a function of the final event (the terminal cost, penalty term) and whose evolution PDE is an {\it m-flow}. The optimal control problem is:

 {\it Find
$$\min_{v(\cdot)\in V}\max_{u(\cdot)\in U} J(u(\cdot),v(\cdot))=\int_{\Gamma_{0T}} L_\alpha (s,x(s),u_\alpha(s),v_\alpha(s))ds^\alpha+g(x(T)),$$
subject to the Cauchy problem
$$\frac{\partial x^i}{\partial s^\alpha}(s)=X^i_\alpha(s,x(s),u_\alpha(s),v_\alpha(s)),$$
$$x(0)=x_0, \,\,s\in \Omega_{0T}\subset \mathbb{R}_+^m, \,\,x\in \mathbb{R}^n,$$}
where $i=1,...,n$; $\alpha =1,...,m$; $u=(u^a)$, $a=1,...,p$, $v=(v^b)$, $b=1,...,q$ are the controls.

To simplify, suppose that the curve $\Gamma_{0T}$ is an increasing curve in the multitime interval $\Omega_{0T}$.

We vary the starting multitime and the initial point. We obtain a larger family of similar multitime problems containing the functional
$$J_{x,t}(u(\cdot),v(\cdot))=\int_{\Gamma_{tT}} L_\alpha (s,x(s),u_\alpha(s),v_\alpha(s))ds^\alpha+g(x(T)),$$
and the evolution constraint (Cauchy problem for first order PDEs system)
$$\frac{\partial x^i}{\partial s^\alpha}(s)=X^i_\alpha(s,x(s),u_\alpha(s),v_\alpha(s)),\,\, x(t)=x, \,\,s\in \Omega_{tT}\subset \mathbb{R}_+^m,\,\, x\in \mathbb{R}^n.$$

\begin{definition} Let $\Psi$ and $\Phi$ be suitable strategies of the two equips of players.

(i) The function
$$m(t,x)=\min_{\Psi\in \mathcal{V}} \max_{u(\cdot)\in U} J_{t,x}( u(\cdot),\Psi[u](\cdot))$$ is called the multitime lower value function.

(ii) The function
$$M(t,x)=\max_{\Phi\in \mathcal{U}} \min_{v(\cdot)\in V} J_{t,x}(\Phi[v](\cdot),v(\cdot)) $$ is called the multitime upper value function.
\end{definition}

The papers \cite{[1]}-\cite{[4]}, \cite{[12]} refer to viscosity solutions of Hamilton-Jacobi-Isaacs equations.
To understand the multitime optimal control and our recent results see the papers \cite{[5]}-\cite{[11]}.

\section{Viscosity solutions of \\multitime upper/lower PDEs}

The key original idea is that the multitime upper value function or the multitime lower value function
are solutions of PDEs, defined in the next Theorem.
Our PDEs contain some implicit assumptions and are valid under certain conditions
which are defined and analyzed for multitime hybrid differential games.

\begin{theorem}
(i) The multitime upper value function $M(t,x)$ is the viscosity solutions of the multitime upper PDE
$$\frac{\partial M}{\partial t^\alpha}(t,x)+\min_{v_\alpha\in \mathcal{V}} \max_{u_\alpha\in \mathcal{U}} \left\lbrace  \frac{\partial M}{\partial x^i}(t,x) X_\alpha^i(t,x,u_\alpha,v_\alpha)+L_\alpha(t,x,u_\alpha,v_\alpha)\right\rbrace =0,$$
which satisfies the terminal condition $M(T,x)=g(x).$

(ii) The multitime lower value function $m(t,x)$ is the viscosity solution of the multitime lower PDE
$$\frac{\partial m}{\partial t^\alpha}(t,x)+\max_{u_\alpha \in \mathcal{U}} \min_{v_\alpha \in \mathcal{V}} \left\lbrace  \frac{\partial m}{\partial x^i}(t,x) X_\alpha^i(t,x,u_\alpha,v_\alpha)+L_\alpha(t,x,u_\alpha,v_\alpha)\right\rbrace =0,$$
which satisfies the terminal condition $m(T,x)=g(x).$
\end{theorem}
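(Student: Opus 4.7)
The plan is to prove part (i); part (ii) is symmetric, obtained by interchanging the roles of $u,v$ and of min and max. I would follow the Fleming--Souganidis template adapted to the multitime setting: first derive a multitime dynamic programming principle (DPP) for $M$ along the increasing curve $\Gamma_{tT}$, then extract the viscosity sub- and super-solution inequalities from it.

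\emph{Step 1: Dynamic programming principle.} For small $h>0$ and an admissible direction $e_\alpha$, I would establish
$$M(t,x) = \max_{\Phi \in \mathcal{U}} \min_{v(\cdot) \in V} \left\{ \int_{\Gamma_{t,t+he_\alpha}} L_\beta\bigl(s, x(s), \Phi[v]_\beta(s), v_\beta(s)\bigr)\, ds^\beta + M\bigl(t+he_\alpha, x(t+he_\alpha)\bigr) \right\}.$$
Each of the two opposite inequalities follows from a concatenation argument for Elliott--Kalton nonanticipating strategies: any admissible strategy on $\Omega_{tT}$ splits into a piece on the initial slab between $\{t\}$ and $\{t+he_\alpha\}$ followed by a strategy starting from the intermediate event, and conversely two such pieces glue into an admissible strategy. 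Additivity of the curvilinear integral along $\Gamma_{tT}$ then splits $J_{t,x}$ accordingly.

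\emph{Step 2: Viscosity inequalities.} Let $\phi \in C^1$ be a test function with $M - \phi$ attaining a local maximum at $(t_0,x_0)$, normalized so that $M(t_0,x_0) = \phi(t_0,x_0)$. Replacing $M$ by $\phi$ in the DPP, a Taylor expansion
$$\phi\bigl(t_0+he_\alpha, x(t_0+he_\alpha)\bigr) - \phi(t_0,x_0) = h\left[ \frac{\partial \phi}{\partial t^\alpha}(t_0,x_0) + \frac{\partial \phi}{\partial x^i}(t_0,x_0)\, X^i_\alpha \right] + o(h),$$
together with the fact that near-optimal controls on vanishing time slabs can be approximated by constant pairs $(u_\alpha, v_\alpha)$, gives after dividing by $h$ and letting $h \downarrow 0$ the subsolution inequality
$$\frac{\partial \phi}{\partial t^\alpha}(t_0,x_0) + \min_{v_\alpha} \max_{u_\alpha} \left\{ \frac{\partial \phi}{\partial x^i}(t_0,x_0)\, X^i_\alpha + L_\alpha \right\} \geq 0.$$
The symmetric argument at a local minimum of $M-\phi$ yields the reverse (supersolution) inequality. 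The terminal condition $M(T,x)=g(x)$ is immediate because $\Gamma_{TT}$ is degenerate and the integral vanishes.

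\emph{Main obstacle.} The delicate point is Step 1. With a single time variable, concatenating Elliott--Kalton strategies is routine; here the ``time'' lives in the partially ordered region $\Omega_{tT}\subset \mathbb{R}^m_+$ and the Bolza payoff is a path-independent curvilinear integral. Path-independence effectively forces a complete-integrability (zero-curvature) condition on the controlled $m$-flow $X^i_\alpha$ so that $\partial x^i/\partial s^\alpha = X^i_\alpha$ admits consistent trajectories, and one must verify that concatenation of nonanticipating strategies along an increasing curve $\Gamma$ preserves admissibility and that the increment $he_\alpha$ stays inside $\Omega_{tT}$. Once these multitime technicalities are settled, the remainder is a direct multitime transcription of the classical Fleming--Souganidis proof.
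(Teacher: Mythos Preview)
Your outline follows the same Fleming--Souganidis template the paper uses: state the dynamic programming principle for $M$ over an increment $h$, then test against a smooth $\phi$ at a local extremum of $M-\phi$ to extract the two viscosity inequalities. The paper, however, does not argue Step~2 by Taylor expansion and passage to the limit; instead it argues each inequality by contradiction and isolates the substantive work into a separately stated \emph{Fundamental Lemma}. Assuming $\phi_{t^\alpha}+H^+_\alpha\le -\theta_\alpha<0$ (resp.\ $\ge \theta_\alpha>0$) at the extremum, the Lemma produces a control $v^*$ (resp.\ a nonanticipating strategy $\Psi$) such that the integrand along $\Gamma_{t,t+h}$ is uniformly $\le -\theta_\alpha/2$ (resp.\ $\ge \theta_\alpha/2$), which after integration contradicts the DPP combined with the local extremum inequality for $M-\phi$. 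In particular, the supersolution half needs a compactness/finite-cover construction on $\mathcal{V}$ to turn ``for each $v$ there is a $u(v)$'' into a bona fide strategy $v\mapsto u$; this is exactly the content your phrase ``near-optimal controls on vanishing slabs can be approximated by constant pairs'' is hiding. Conversely, the paper takes the multitime DPP essentially as given (one line, via additivity of the curvilinear integral and concatenation), whereas you flag it as the main obstacle; so you and the paper agree on the architecture but locate the delicate step in different places.
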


\begin{proof} We introduce the so-called upper and lower Hamiltonian defined respectively by
$$H^+_\alpha(t,x,p)=\min_{v_\alpha\in \mathcal{V}} \max_{u_\alpha \in \mathcal{U}}\lbrace p_i(t) X_\alpha^i(t,x,u_\alpha,v_\alpha)+L_\alpha(t,x,u_\alpha,v_\alpha)\rbrace,$$
$$H^-_\alpha(t,x,p)=\max_{u_\alpha\in \mathcal{U}} \min_{v_\alpha\in \mathcal{V}}\lbrace p_i(t) X_\alpha^i(t,x,u_\alpha,v_\alpha)+L_\alpha(t,x,u_\alpha,v_\alpha)\rbrace.$$

We prove only the first statement. For $s\in \Omega_{tt+h},$ we use the Cauchy problem
$$\frac{\partial x^i}{\partial s^\alpha}(s)=X^i_\alpha(s,x(s),u_\alpha(s),v_\alpha(s)),$$
$$x(t)=x,\,\, s\in \Omega_{tt+h}\subset \mathbb{R}_+^m,\,\, x\in \mathbb{R}^n$$
and the cost functional (mechanical work)
$$J_{t,x}(u(\cdot),v(\cdot))=\int_{\Gamma_{tt+h}} L_\alpha (s,x(s),u_\alpha(s),v_\alpha(s))ds^\alpha.$$
For $s\in \Omega_{tT}\setminus \Omega_{tt+h},$ the cost is $M(t+h,x(t+h))$.
Consequently,
 $$J_{t,x}(u(\cdot),v(\cdot))=\int_{\Gamma_{tt+h}} L_\alpha (s,x(s),u_\alpha(s),v_\alpha(s))ds^\alpha+M(t+h,x(t+h)),$$
with $M(t,x)\geq M(t+h,x(t+h))$, because $M(t,x)$ is the greatest cost.

Thus we have the multitime dynamic programming optimality condition

$$
M(t,x)=\max_{\Phi\in \mathcal{A}(t)}\min_{v_\alpha\in V(t)}\bigg\{\int_{\Gamma_{tt+h}} L_\alpha (s,x(s),\Phi [v_\alpha](s),v_\alpha(s))ds^\alpha +M(t+h,x(t+h))\bigg\}.
$$

Let $(\omega)\in C^1(\Omega_{0T}\times \mathbb{R}^n)$ be a generating vector field.
We analyse two cases:

\medskip
{\bf Case 1} Suppose $M-\omega$ attains a local maximum at $(t,x)\in \Omega_{0T}\times \mathbb{R}^n.$
We must prove the inequality
$$\frac{\partial \omega}{\partial t^\alpha}(t,x)+H^+_{\alpha}\left( t,x,\frac{\partial \omega}{\partial x^i}(t,x)\right) \geq 0.\eqno(1)$$

For that, we suppose the contrary
$$
\frac{\partial \omega}{\partial t^\alpha}(t,x)+H^+_{\alpha}\left( t,x,\frac{\partial \omega}{\partial x^i}(t,x)\right) \leq -\theta_\alpha <0,
$$
for each $\alpha=\overline{1,m}$ and for some constant 1-form $\theta_\alpha>0.$

Let $h=(h^\alpha),$ with $h^\alpha>0.$

We use the {\it Fundamental Lemma} in the next Section. This implies that, for each sufficiently small $\Vert h\Vert$ and all $\omega\in \mathcal{A}(t),$ the relation
$$
\int_{\Gamma_{tt+h}} \bigg(L_\alpha(s,x(s),\Phi[v_\alpha](s),v_\alpha(s)) +\frac{\partial \omega}{\partial x^i} X_\alpha^i(s,x(s),\Phi[v_\alpha](s),v_\alpha(s))+\frac{\partial \omega}{\partial s^\alpha}\bigg) ds^\alpha \leq -\frac{h^{\alpha}\theta_{\alpha}}{2}
$$
holds for $v_\alpha\in \mathcal{V}(t).$
Thus
$$
\max_{\Phi\in \mathcal{A}(t)}\min_{v_\alpha\in V(t)} \bigg\{\int_{\Gamma_{tt+h}} \left(L_\alpha(s,x(s),\Phi[v_\alpha](s),v_\alpha(s)) +\frac{\partial \omega}{\partial x^i} X_\alpha^i(s,x(s),\Phi[v_\alpha](s),v_\alpha(s))\right.
$$
$$
\hspace{5cm}\left.+\frac{\partial \omega}{\partial s^\alpha}\right)ds^\alpha \bigg\}\leq -\frac{h^{\alpha}\theta_{\alpha}}{2},\eqno(2)
$$
with $x(\cdot)$ solution of the previous Cauchy problem.

Because $M-\omega$ has a local maximum at the point $(t,x),$ we have
$$M(t,x)-\omega(t,x)\geq M(t+h,x(t+h))-\omega(t+h,x(t+h)).$$

The multitime dynamic programming optimality condition and by the local maximum definition, we can write
$$M(t,x)-M(t+h,x(t+h))=\max_{\Phi\in \mathcal{A}(t)}\min_{v_\alpha\in \mathcal{V}(t)}  \int_{\Gamma_{tt+h}} L_\alpha (s,x(s),\Phi [v_\alpha](s),v_\alpha(s))ds^\alpha. $$

Consequently, we have
$$\max_{\Phi\in \mathcal{A}(t)}\min_{v_\alpha\in \mathcal{V}(t)}  \int_{\Gamma_{tt+h}} L_\alpha (s,x(s),\Phi [v_\alpha](s),v_\alpha(s))ds^\alpha \geq \omega(t,x)-\omega(t+h,x(t+h))$$
or
$$
\max_{\Phi\in \mathcal{A}(t)}\min_{v_\alpha\in \mathcal{V}(t)}\int_{\Gamma_{tt+h}} L_\alpha (s,x(s),\Phi [v_\alpha](s),v_\alpha(s))ds^\alpha  +\omega(t+h,x(t+h))-\omega(t,x)\geq 0.\eqno(3)
$$

On the other hand,
$$
\omega(t+h,x(t+h))-\omega(t,x)= \int_{\Gamma_{tt+h}} d\omega=\int_{\Gamma_{tt+h}} D_\alpha\omega\,\, ds^\alpha
$$
$$ =\int_{\Gamma_{tt+h}} \left(  \frac{\partial \omega}{\partial x^i} X_\alpha^i(s,x(s),\Phi[v_\alpha](s),v_\alpha(s))+\frac{\partial \omega}{\partial s^\alpha}\right)  ds^\alpha.
$$
So, the relation $(3)$ contradicts the relation $(2)$.

\medskip
{\bf Case 2} Suppose $M-\omega$ attains a local minimum at $(t,x)\in \Omega_{0T}\times \mathbb{R}^n.$
We must prove that
$$\frac{\partial \omega}{\partial t^\alpha}(t,x)+H^+_{\alpha}(t,x,\frac{\partial \omega}{\partial x^i}(t,x))\leq 0, \eqno(4).$$
To do this, we suppose the contrary
$$
\frac{\partial \omega}{\partial t^\alpha}(t,x)+H^+_{\alpha}(t,x,\frac{\partial \omega}{\partial x^i}(t,x))\geq \theta_\alpha >0,
$$
for each $\alpha=\overline{1,m}$ and for some constant 1-form $\theta_\alpha>0.$

Let $h=(h^\alpha),$ with $h^\alpha>0.$

We use the {\it Fundamental Lemma} in the next Section. This implies that, for each sufficiently small $\Vert h\Vert$ and all $\omega\in \mathcal{A}(t),$ the relation
$$
\int_{\Gamma_{tt+h}} \bigg( L_\alpha(s,x(s),\Phi[v_\alpha](s),v_\alpha(s)) +\frac{\partial \omega}{\partial x^i} X_\alpha^i(s,x(s),\Phi[v_\alpha](s),v_\alpha(s)) +\frac{\partial \omega}{\partial s^\alpha}\bigg) ds^\alpha\geq\frac{h^\alpha\theta_\alpha}{2}
$$
holds for $v_\alpha\in \mathcal{V}(t).$
Thus
$$
\max_{\Phi\in \mathcal{A}(t)}\min_{v_\alpha\in \mathcal{V}(t)} \bigg\{ \int_{\Gamma_{tt+h}}  \left( L_\alpha(s,x(s),\Phi[v_\alpha](s),v_\alpha(s)) +\frac{\partial \omega}{\partial x^i} X_\alpha^i(s,x(s),\Phi[v_\alpha](s),v_\alpha(s))\right.
$$
$$
\hspace{5cm}\left.+\frac{\partial \omega}{\partial s^\alpha}\right) ds^\alpha\bigg\}\geq\frac{h^\alpha\theta_\alpha}{2}.\eqno(5)
$$

Because $M-\omega$ has a local minimum at the point $(t,x),$ we have
$$M(t,x)-\omega(t,x)\leq M(t+h,x(t+h))-\omega(t+h,x(t+h)),$$ where $x(\cdot)$
is the solution of the previous Cauchy problem.

By the multitime dynamic programming optimality condition and by
the local minimum definition, we can write
$$M(t,x)-M(t+h,x(t+h))=\max_{\Phi\in \mathcal{A}(t)}\min_{v_\alpha\in \mathcal{V}(t)} \left\lbrace \int_{\Gamma_{tt+h}} L_\alpha (s,x(s),\Phi [v_\alpha](s),v_\alpha(s))ds^\alpha\right\rbrace.$$
Using the inequality
$$M(t,x)-M(t+h,x(t+h))\leq \omega(t,x)-\omega(t+h,x(t+h)),$$
we find
$$\max_{\Phi\in \mathcal{A}(t)}\min_{v_\alpha\in \mathcal{V}(t)} \left\lbrace\int_{\Gamma_{tt+h}} L_\alpha (s,x(s),\Phi [v_\alpha](s),v_\alpha(s))ds^\alpha \right\rbrace\leq \omega(t,x)-\omega(t+h,x(t+h))$$
and
$$
\max_{\Phi\in \mathcal{A}(t)}\min_{v_\alpha\in \mathcal{V}(t)}\bigg\{  \int_{\Gamma_{tt+h}} L_\alpha (s,x(s),\Phi [v_\alpha](s),v_\alpha(s))ds^\alpha\bigg\}  +\omega(t+h,x(t+h))-\omega(t,x)\leq 0.\eqno(6)
$$

On the other hand,
$$
\omega(t+h,x(t+h))-\omega(t,x)=  \int_{\Gamma_{tt+h}} d\omega=\int_{\Gamma_{tt+h}} D_\alpha\omega \,\,ds^\alpha
$$
$$
=\int_{\Gamma_{tt+h}} \bigg(\frac{\partial \omega}{\partial x^i} X_\alpha^i(s,x(s),\Phi[v_\alpha](s),v_\alpha(s))+\frac{\partial \omega}{\partial s^\alpha}\bigg)ds^\alpha.
$$
That is why the relation $(6)$ contradicts the relation $(5).$
\end{proof}

\section{Fundamental contradict Lemma}

The short proofs in the previous section are based on an interesting Lemma. 

\begin{lemma}\label{l-1}
Let $\omega\in C^1(\Omega_{0T}\times \mathbb{R}^n)$.

(i)If $M-\omega$ attains a local maximum at $(t_0,x_0)\in \Omega_{0T}\times \mathbb{R}^n$ and
$$\omega_{t^{\alpha}}(t_0,x_0)+ H^+_\alpha\left(t_0,x_0,\frac{\partial\omega}{\partial x^i}(t_0,x_0)\right)\leq - \theta_\alpha <0,$$
then, for all vectors $h=(h^\alpha)$, with sufficiently small $||h||$,
there exists a control $v=(v_\alpha)\in {\mathcal V}(t_0)$ such that
the relation $(2)$ holds for all strategies $\Phi\in {\mathcal A}(t_0)$.

(ii) If $M-\omega$ attains a
local minimum at $(t_0,x_0)\in \Omega_{0T}\times \mathbb{R}^n$ and
$$\omega_{t^{\alpha}}(t_0,x_0)+ H^+_\alpha\left(t_0,x_0,\frac{\partial\omega}{\partial x^i}(t_0,x_0)\right)\geq \theta_\alpha >0,$$
then, for all vectors $h=(h^\alpha)$, with sufficiently small $||h||$,
there exists a control $u=(u_\alpha)\in {\mathcal U}(t_0)$ such that
the relation $(5)$ holds for all strategies $\Psi\in {\mathcal B}(t_0)$.

\end{lemma}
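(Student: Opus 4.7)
The plan is to prove both parts by exploiting the definition of the upper Hamiltonian $H^+_\alpha(t,x,p)=\min_{v}\max_{u}\{p_iX^i_\alpha+L_\alpha\}$ together with uniform continuity of the data along short trajectories emanating from $(t_0,x_0)$. I focus on part (i); part (ii) is symmetric.

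For part (i), since $H^+_\alpha$ is a minimum over $v_\alpha\in \mathcal{V}$, one can select (or, if the min is not attained, $\varepsilon$-approximate with $\varepsilon=\theta_\alpha/4$) a value $v^*_\alpha\in \mathcal{V}$ nearly attaining the outer minimum, so that
$$\omega_{t^\alpha}(t_0,x_0)+\max_{u_\alpha\in\mathcal{U}}\bigl\{\omega_{x^i}(t_0,x_0)\,X^i_\alpha(t_0,x_0,u_\alpha,v^*_\alpha)+L_\alpha(t_0,x_0,u_\alpha,v^*_\alpha)\bigr\}\leq -\tfrac{3}{4}\theta_\alpha.$$
I would then take the constant control $v_\alpha(s)\equiv v^*_\alpha$ on $\Omega_{t_0,t_0+h}$. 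For any strategy $\Phi\in \mathcal{A}(t_0)$, the pair $(\Phi[v_\alpha](s),v^*_\alpha)$ determines, through the Cauchy problem with $x(t_0)=x_0$, a unique trajectory $x(\cdot)$. Because the inequality above is already a maximum over $u$, it holds pointwise at $(t_0,x_0)$ uniformly in $\Phi$.

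The next step is to promote this pointwise inequality to the integral bound (2) along $\Gamma_{t_0,t_0+h}$. Under the standing hypotheses (continuity of $L_\alpha$, $X^i_\alpha$ and of the first derivatives of $\omega$; compactness of $\mathcal{U}$; well-posedness of the $m$-flow with uniform Lipschitz estimates in $x$), the family of trajectories $\{x_\Phi(\cdot)\}_{\Phi\in \mathcal{A}(t_0)}$ converges to $x_0$ uniformly in $\Phi$ as $\|h\|\to 0$, by a Gronwall-type estimate on the $m$-flow. Combined with uniform continuity of the integrand on a fixed compact neighbourhood of $(t_0,x_0)\times \mathcal{U}\times\{v^*_\alpha\}$, this yields, for all $s\in \Omega_{t_0,t_0+h}$ and all $\Phi$,
$$\frac{\partial\omega}{\partial s^\alpha}(s,x(s))+\frac{\partial\omega}{\partial x^i}(s,x(s))\,X^i_\alpha(s,x(s),\Phi[v_\alpha](s),v^*_\alpha)+L_\alpha(s,x(s),\Phi[v_\alpha](s),v^*_\alpha)\leq -\tfrac{1}{2}\theta_\alpha.$$
Integrating along the increasing curve $\Gamma_{t_0,t_0+h}$, for which $\int_{\Gamma}ds^\alpha=h^\alpha$, gives the integral inequality for this specific $v$ and arbitrary $\Phi$; passing to $\max_\Phi\min_{v_\alpha}$ (which is dominated by the particular choice $v_\alpha=v^*_\alpha$) yields relation (2). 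Part (ii) follows by the mirror construction: for every $v_\alpha$ one extracts an "optimal response" $u^*_\alpha(v_\alpha)\in \mathcal{U}$ with pointwise value at least $\tfrac{3}{4}\theta_\alpha$; the resulting measurable selection furnishes the required strategy (or control, if a saddle exists), and the same continuity transfer yields relation (5).

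The principal technical obstacle is the uniformity in $\Phi$ of the trajectory estimate $x_\Phi(s)\to x_0$. It requires a uniform-in-$(u,v)$ Lipschitz bound on $X^i_\alpha$ in $x$ together with well-posedness of the multitime $m$-flow, which in turn presupposes a complete-integrability (closedness) condition on the controlled vector fields $(X^i_\alpha)$; this must be tacitly included in the "suitable conditions" of Section~1. A secondary subtlety, visible in part (ii), is that the construction $v_\alpha\mapsto u^*_\alpha(v_\alpha)$ is naturally a non-anticipative strategy rather than an open-loop control, so the lemma as literally stated should be read as "there exists a strategy $\Phi\in \mathcal{A}(t_0)$"; I regard the discrepancy between "control" and "strategy" in the statement as a notational slip.
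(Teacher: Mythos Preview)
Your proposal is correct and follows essentially the same route as the paper's proof: for part~(i) you select a near-minimizer $v^*_\alpha$ of the outer $\min_v$, take the constant control $v_\alpha(s)\equiv v^*_\alpha$, use uniform continuity of the integrand along short trajectories to propagate the pointwise bound to $-\tfrac{1}{2}\theta_\alpha$, and integrate over $\Gamma_{t_0,t_0+h}$; this is exactly what the paper does. Your remarks on the uniformity in $\Phi$ of the trajectory estimate and on the ``control versus strategy'' slip in the statement of~(ii) are also on point.

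The only noteworthy difference is in part~(ii): where you invoke a measurable selection $v_\alpha\mapsto u^*_\alpha(v_\alpha)$ abstractly, the paper gives a more elementary explicit construction. It uses uniform continuity of the integrand in the $v$-variable to get, for each $v$, a ball $B(v,r(v))$ on which the same $u(v)$ works with bound $\tfrac{3}{4}\theta_\alpha$; then compactness of $\mathcal{V}$ yields a finite subcover $B(v_1,r_1),\dots,B(v_n,r_n)$ with associated $u_1,\dots,u_n$, and the response map $\psi:\mathcal{V}\to\mathcal{U}$ is defined piecewise by $\psi(v)=u_k$ on $B(v_k,r_k)\setminus\bigcup_{i<k}B(v_i,r_i)$. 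This avoids any appeal to selection theorems and makes the resulting strategy $\Psi[v_\alpha](s)=\psi(v_\alpha(s))$ manifestly well defined and non-anticipative. Your measurable-selection shortcut is fine in spirit, but the finite-cover argument is both simpler and closer to what the standing hypotheses actually support.
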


\begin{proof}
We introduce the 1-form $\Lambda$ of components
$$
\Lambda_\alpha = L_\alpha(s,x(s),\Phi[v_\alpha](s),v_\alpha(s))+\frac{\partial \omega}{\partial x^i} X_\alpha^i(s,x(s),\Phi[v_\alpha](s),v_\alpha(s))+\frac{\partial \omega}{\partial s^\alpha}.
$$

(i) By hypothesis
$$\min_{v\in {\mathcal V}}\,\, \max_{u\in {\mathcal U}} \,\Lambda_\alpha (t_0,x_0,u_\alpha,v_\alpha)\leq -\theta_\alpha <0.$$
Consequently there exists some control $v^*\in {\mathcal V}$ such that
$$\max_{u\in {\mathcal U}} \, \Lambda_\alpha (t_0,x_0,u_\alpha,v^*_\alpha)\leq -\theta_\alpha,$$ for each $\alpha=\overline{1,m}.$
On the other hand, the uniform continuity of the 1-form $\Lambda=(\Lambda_\alpha)$ implies
$$\max_{u\in {\mathcal U}} \, \Lambda_\alpha (t_0,x(s),u_\alpha,v^*_\alpha)\leq - \frac{1}{2}\,\theta_\alpha$$
provided $s\in \Omega_{t_0t_0+h}$, for any small $||h||>0$, and $x(\cdot)$ is solution of PDE on $\Omega_{t_0t_0+h}$,
for any $u(\cdot),v(\cdot)$, with initial condition $x(t_0)=x_0$. It follows that,
for the control $v(\cdot)=v^*$ and for any strategy $\Phi\in {\mathcal A}(t_0)$, we have
$$
L_\alpha(s,x(s),\Phi[v_\alpha](s),v_\alpha(s))+\frac{\partial \omega}{\partial x^i} X_\alpha^i(s,x(s),\Phi[v_\alpha](s),v_\alpha(s))+\frac{\partial \omega}{\partial t^\alpha}\leq \frac{-\theta_\alpha}{2}
$$
for $s\in \Omega_{t_0t_0+h}$. Taking the curvilinear integral along
an increasing curve $\Gamma_{t_0t_0+h}$, we obtain the relation $(2)$.

(ii) The inequality in the Lemma reads
$$\min_{v\in {\mathcal V}}\,\, \max_{u\in {\mathcal U}} \,\Lambda_\alpha (t_0,x_0,u_\alpha,v_\alpha)\geq \theta_\alpha >0.
$$
Consequently, for each control $v\in {\mathcal V}$ there exists a control $u=u(v)\in U$ such that
$$\Lambda_\alpha (t_0,x_0,u_\alpha,v_\alpha)\geq \theta_\alpha.$$
The uniform continuity of the 1-form $\Lambda$ implies
$$\Lambda_\alpha (t_0,x_0,u_\alpha,\xi_\alpha)\geq \frac{3}{4}\,\theta_\alpha,\,\,\forall \xi\in B(v,r)\cap V\, \hbox{and some}\,\, r=r(v)>0.$$
Due to compactness of ${\mathcal V}$, there exists finitely many distinct points
$$v_1,...,v_n \in {\mathcal V}; \,u_1,..., u_n\in {\mathcal U}$$
and the numbers $r_1,...,r_n >0$ such that ${\mathcal V}\subset \bigcup_{i=1}^{n} B(v_i,r_i)$ and
$$\Lambda_\alpha (t_0,x_0,u_i,\xi)\geq \frac{3}{4}\,\theta_\alpha,\,\,\forall \xi\in B(v_i,r_i).$$

Define
$$\psi:{\mathcal V}\to {\mathcal U},\, \psi(v)=u_k\,\, \hbox{if}\,\, v\in B(u_k,r_k)\setminus \bigcup_{i=1}^{k-1} B(u_i,r_i),\,k=\overline{1,n}.$$
In this way,
$$\Lambda_\alpha (t_0,x_0,\psi(v_\alpha),v_\alpha)\geq \frac{3}{4}\,\theta_\alpha, \forall v\in {\mathcal V}.$$
Again, the uniform continuity of the 1-form $\Lambda$ and a sufficiently small $||h||>0$ give
$$\Lambda_\alpha (s,x(s),\psi(v_\alpha),v_\alpha)\geq \frac{1}{2}\,\theta_\alpha, \forall v\in {\mathcal V},\, s\in \Omega_{t_0t_0+h},$$
and any solution $x(\cdot)$ of PDE on $\Omega_{t_0t_0+h}$, for any $u(\cdot), v(\cdot)$ and with initial condition $x(t_0)=x_0$.
Now define a new strategy
$$\Psi\in {\mathcal B}(t_0),\,\,\Psi[v_\alpha](s)=\psi(v_\alpha(s)),\, \forall v\in {\mathcal V}(t_0),\,s\in \Omega_{t_0t_0+h}.$$
Finally, for each $\alpha,$ we have the inequality
$$\Lambda_\alpha (s,x(s),\Psi[v_\alpha](s),v_\alpha(s))\geq \frac{1}{2}\,\theta_\alpha, \forall \,s\in \Omega_{t_0t_0+h},$$
and taking the curvilinear integral along an increasing curve $\Gamma_{t_0t_0+h}$, we find the result in Lemma.
\end{proof}

Constantin Udri\c ste, Elena-Laura Otob\^{i}cu, Ionel \c Tevy\\ 
University {\sc Politehnica} of Bucharest, Faculty of Applied
Sciences, \\Department of Mathematics and Informatics, \\ Splaiul
Independentei 313, RO-060042, Bucharest, Romania.\\
   E-mail: {udriste@mathem.pub.ro}; {laura.otobicu@gmail.com}; {vascatevy@yahoo.fr}

\end{document}